\newtheorem{theorem}{Theorem}
\newtheorem*{lemma*}{Claim}
\newtheorem{lemma}[theorem]{Lemma}
\newtheorem{conjecture}[theorem]{Conjecture}
\newtheorem{prop}[theorem]{Proposition}
 \theoremstyle{definition}\newtheorem{definition}[theorem]{Definition}
\numberwithin{theorem}{section}
\numberwithin{equation}{section}
\definecolor{turquoise}{cmyk}{0.65,0,0.1,0.1}
\definecolor{purple}{rgb}{0.65,0,0.65}
\definecolor{green}{rgb}{0, 0.5, 0}
\definecolor{blue}{rgb}{0, 0, 1}
\definecolor{orange}{rgb}{0.8, 0.6, 0.2}
\definecolor{red}{rgb}{0.8, 0.2, 0.2}
\definecolor{brown}{rgb}{0.5, 0.16, 0.16}
\title[Tur\'an number]{Tur\'an Numbers of Subdivisions of Multipartite Graphs}
\author{Xiao-Chuan Liu}
\address[Liu]{Departamento de Matemática,
 Universidade Federal de Pernambuco,
	Avenida Jornalista Aníbal Fernandes - Cidade Universitária, Recife, Brasil}
\email{xiaochuan.liu@ufpe.br}
\author{Danni Peng}
\address[Peng]{Instituto de Matemática Pura e Aplicada,
Estrada Dona Castorina 110, Jardim Botânico, Rio de Janeiro, 22460-320, Brasil}
\email{dannipeng49@gmail.com}
\author{Xu Yang}
\address[Yang]{Institute of Computing, Federal University of Alagoas,
	Av. Lourival Melo Mota, S/N, Maceió, Brasil}
\email{yang@ic.ufal.br}
\begin{document}
\maketitle{}

\begin{abstract} In this paper, we investigate the Tur\'an exponent for $1$-subdivisions of graphs that are neither bipartite nor complete. Specifically,
we establish an upper bound on the Turán number of the 1-subdivision of $K_{s,t}^+$, where $K_{s,t}^+$ is obtained by adding a single edge within the part of size $s$ of the complete bipartite graph $K_{s,t}$, with $4\leq s \leq t$. In addition, we derive an upper bound for the extremal number of a family of graphs formed by (possibly degenerate) 1-subdivisions of certain tripartite graphs.

\noindent \textit{Key words and phrases}: Tur\'an number, $1$-subdivision, non-bipartite graphs.

\noindent \textit{MSC subject classification code}: 05C35.
\end{abstract}

\section{Introduction}
One of the most fundamental problems in extremal graph theory is to determine the
Tur\'an number of a graph $H$.
By definition, let $\text{ex}(n,H)$ denote the maximum possible number of edges of an $H$-free graph on $n$ vertices.
The cornerstone in this area is the classical
Erd{\H{o}}s-Stone theorem~\cite{erdos1946structure}, which says that
\begin{equation}
\text{ex}(n,H)= \bigg(1-\frac{1}{\chi(H)-1}+o(1) \bigg) {n \choose 2},
\end{equation}
where $\chi(H)$ is the chromatic number of $H$.
However, when $H$ is a bipartite graph, the above theorem only gives the crude bound $o(n^2)$. In fact,
to understand the behavior of the extremal numbers of bipartite graphs
is one of the central topics in extremal combinatorics.
There are several general conjectures (see ~\cite{Furedi2013history}).
One of them is the Rational Exponents Conjecture
 proposed by Erd{\H{o}}s and Simonovits ~\cite{erdHos1981combinatorial}, which states that for every rational $r\in (1,2)$, there exists a graph $H$ such that $\text{ex}(n,H)=\Theta(n^r)$.
 Bukh and Conlon ~\cite{bukh2018rational}  took a major step towards the conjecture by showing that there exists a family of graphs $\mathcal{H}$ such that $\text{ex}(n,\mathcal{H})=\Theta(n^r)$ for any rational $r\in(1,2)$.
 An alternative approach was suggested by Kang, Kim and Liu~\cite{kang2021rational}, who introduced the following "Subdivision Conjecture" and showed that it implies the Rational Exponent Conjecture. For any graph $H$, its 1-subdivision $H^{\text{sub}}$ is obtained by replacing each
edge in $E(H)$ by a path of length two.
In other words, $H^{\text{sub}}$ is a bipartite graph with a vertex bipartition represented by $V(H) \cup E(H)$, and its edge set is defined via the incidence relations of $H$.
The Subdivision Conjecture is as follows.

\begin{conjecture}[Kang, Kim and Liu~\cite{kang2021rational}]
For any bipartite graph $H$, if $\text{ex}(n,H)=O(n^{1+\alpha})$ for some $\alpha>0$, then $\text{ex}(n,H^{\text{sub}})=O(n^{1+\frac \alpha 2})$.
\end{conjecture}


The conjecture holds for all non-bipartite graphs $H$, as a well-known result of Furedi~\cite{Fredi1991OnAT} states that
$\text{ex}(n,H^{\text{sub}})=O(n^{3/2})$ (a simple proof using dependent random choice was given later by Alon, Krivelevich and Sudakov~\cite{alon2003turan}).
Moreover, resolving a conjecture made by
Erd{\H{o}}s~\cite{erdous1988problems},
Conlon and Lee~\cite{conlon2021extremal} showed that the Tur\'an exponent of the 1-subdivision of the clique $K_t$ is strictly less than $3/2$.
Not long afterwards, Janzer~\cite{janzer2018improved} improved this to
\begin{equation}\label{complete_janzer}
	\textup{ex}(n,K_t^{\text{sub}})=O(n^{\frac32-\frac1{4t-6}}),
\end{equation}
which is sharp in the case $t=3$ since $\textup{ex}(n,C_6)=\Theta(n^{4/3})$(see~\cite{bondy1974cycles}).



Conlon and Lee~\cite{conlon2021extremal} also proved a similar result for the subdivision of the bipartite graph $K_{s,t}$, showing that the Tur\'an exponent of the subdivision of $K_{s,t}$ is not greater than $3/2-1/12s$.
In particular, since the Tur\'an exponent of $K_{s,t}$ is $2-1/s$ if $t$ is sufficiently large (see~\cite{kollar1996norm,kHovari1954problem,bukh2021extremal}),
Subdivision Conjecture suggests that  $3/2-1/2s$ should be the correct exponent of $K_{s,t}^{\text{sub}}$.
After a few months,
Conlon, Janzer and Lee~\cite{conlon2021more} obtained the following result.

\begin{theorem}[Conlon, Janzer and Lee~\cite{conlon2021more}]\label{more}
For any integers $2 \leq s\leq t$, $$\textup{ex}(n,K_{s,t}^{\text{sub}})=O(n^{\frac32-\frac{1}{2s}}).$$
\end{theorem}
When $t$ is sufficiently large,
one can combine the recently developed random polynomial method of Bukh and Conlon~\cite{bukh2015random}
to show that the exponent indeed provide the correct order (see~\cite{bukh2015random,  bukh2021extremal, bukh2018rational, Conlon2019graphs}).
 More precisely, for any integer $s$ and for any $t$ sufficiently large depending on $s$,
\begin{equation}\label{matchingKst}
	\textup{ex}(n, K_{s,t}^{\text{sub}})= \Theta(n^{\frac32-\frac1{2s}}).
\end{equation}

In recent years, there has been a great deal of interest in bounding the extremal numbers of subdivisions of other bipartite graphs, see for example \cite{janzer2020extremal,janzer2021extremal,jiang2020turan,jiang2023many,sudakov2020turan}. Nevertheless, a proof of the Subdivision Conjecture still seems far away, and it is therefore natural to study the 1-subdivision of non-bipartite graphs in order to develop a more comprehensive theory.
Janzer~\cite{janzer2018improved} obtained some bounds for a particular class of non-bipartite graphs (see below), but the current situation is still very unsatisfactory, and there is still no conjecture for non-bipartite $H$ analogous to the one introduced by Kang, Kim and Liu~\cite{kang2021rational}.

In other words, in general, if the chromatic number of $H$ is $k\geq 3$, although its Tur\'an number is of order $\Theta(n^2)$, there is no theoretical prediction available for the Tur\'an number of its $1$-subdivision graph $H^{\text{sub}}$, with a single exceptional case of the complete graph $K_k$, for which~(\ref{complete_janzer}) provides a potentially sharp upper bound. Here we contribute to this topic by studying  the extremal number of the $1$-subdivisions of non-bipartite (and non-complete) graphs.

%
%
%

In this paper, we  study the extremal number of the 1-subdivision of the tripartite graph $K_{s,t}^+$, which is obtained from the complete bipartite graph $K_{s,t}$ by adding a single edge inside the part of size $s$.

\begin{theorem}\label{thm_2}
For any positive integers $4 \leq s \leq t$,
\begin{equation}
\textup{ex}(n, (K_{s,t}^+)^{\text{sub}})= O(n^{\frac 32 -\frac{s-2}{2(s-1)s}}).
\end{equation}
\end{theorem}
The best known lower  bound of the Tur\'an number of $ (K_{s,t}^+)^{\text{sub}}$ is from the lower bound of $\text{ex}(n, K_{s,t}^{\text{sub}})$, whose exponent is $\frac32-\frac{1}{2s}$ if $t$ is sufficiently large, as shown in ~\eqref{matchingKst}.   The difference of exponents of the known lower bound and our upper bound is $\frac1{2(s-1)s}$.

We also establish upper bounds on the exponents for the subdivisions of
a family of graphs $\mathcal{F}_{1,s,t}$, which includes $(K_{1,s,t})^{\text{sub}}$.
Let $\mathcal{F}_{1,s,t}$ denote the family of bipartite graphs constructed as follows: 

We start with a copy of $K_{s,t}^{\text{sub}}$, the subdivision of the complete bipartite graph $K_{s,t}$. Recall that the vertex set of $K_{s,t}^{\text{sub}}$ can be viewed  as  $V(K_{s,t})\cup E(K_{s,t})$, where each edge of $K_{s,t}$ corresponds to a new subdivision vertex. Next, we introduce a new vertex $r$.
For each vertex $v\in V(K_{s,t})$, we attach a path of length 2 from $r$ to  $v$.
The internal (middle) vertices of each such path can be chosen in one of the following two ways:
\begin{enumerate}
\item From the set $E(K_{s,t})$, i.e., the subdivision vertices of $K_{s,t}^{\text{sub}}$; 
\item From a new set $N$ of vertices, where each element of $N$ is distinct from $V(K_{s,t}^{\text{sub}})\cup \{r\}$.
\end{enumerate}
Importantly, the internal vertices of these $s+t$ paths may be repeated.
The family $\mathcal{F}_{1,s,t}$ consists of all bipartite graphs that can be obtained through this construction.

Should we choose to create these new $s+t$ paths
using pairwise distinct vertices, we obtain the graph $K_{1,s,t}^{\text{sub}}$.
Since $K_{1,s,t}^{\text{sub}}$ serves as our primary object of interest, we refer to the remaining graphs in this family as {\it degenerate}. In Figure \ref{figure}, we illustrate two examples: the first is $K_{1,2,3}^{sub}$ and the second is a degenerate case.
Inside each dashed rectangle, we display a  copy of $K_{2,3}^{\text{sub}}$, whose vertex set is the union of $V(K_{2,3})=\{y_1,y_2,z_1,z_2,z_3\}$ and $E(K_{2,3})=\{b_{11},b_{12},b_{13},b_{21},b_{22},b_{23}\}$.

\begin{figure}[h]
	\centering
	\includegraphics[width=1\textwidth]{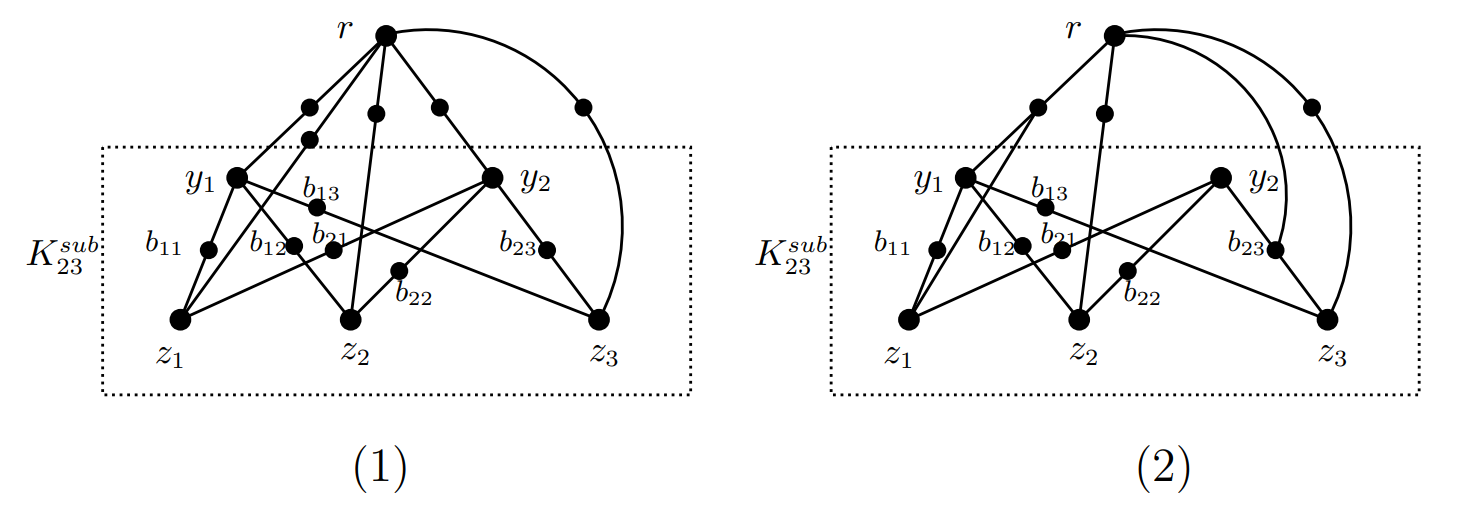}
	\caption{(1) is $K_{1,2,3}^{\text{sub}}$, a non-degenerate example. (2) is a degenerate example. Here $ry_1$ is connected with a new vertex $Y$. $rz_1$ used the same vertex $Y$. $ry_2$ is connected with $b_{23}$. Finally, $rz_2$ and $rz_3$ are both connected with the corresponding new vertices. }\label{figure}
\end{figure}

\begin{theorem}\label{family_version} For any integers $3\leq s \leq t$,
 $\text{ex}(n,  \mathcal F_{1,s,t})= O(n^{1+\frac{s^2+1}{2s+2s^2}})$.
\end{theorem}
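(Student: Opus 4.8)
\textbf{Proof proposal.} The plan is to prove that there is a constant $C=C(s,t)$ so that every $n$-vertex graph $G$ with $e(G)\ge C n^{1+\alpha}$, where $\alpha = \frac{s^2+1}{2s+2s^2}=\tfrac12-\frac{s-1}{2s(s+1)}$, contains a member of $\mathcal F_{1,s,t}$. To begin I would apply the standard cleaning and regularisation, in the spirit of Janzer's scheme~\cite{janzer2018improved}: pass to a subgraph $G_0$ with minimum degree $\ge d:=c\,n^{\alpha}$ and all degrees within a constant factor of $d$, and then delete a small proportion of edges so that $G_0$ has no ``short dense spot'' and so that the number of copies in $G_0$ of each relevant bounded-size obstruction ($C_4$'s with a prescribed pair of endpoints, short paths and cycles) is not much larger than its expectation. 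Writing $N_2(u):=\{w\ne u:N(w)\cap N(u)\ne\varnothing\}$, this lets one arrange that $|N_2(u)|=\Theta(d^2)$ for all but a negligible proportion of vertices $u$. Note that $1+\alpha$ lies above the Tur\'an threshold for $K_{s+1,t}^{\text{sub}}$ supplied by Theorem~\ref{more}, so $G_0$ already contains very many copies of $K_{s+1,t}^{\text{sub}}$; the work is to upgrade one of them.

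The key structural observation is that $K_{1,s,t}^{\text{sub}}$ is a copy of $K_{s+1,t}^{\text{sub}}$ between the parts $A=\{r,y_1,\dots,y_s\}$ and $B=\{z_1,\dots,z_t\}$, together with a subdivided star inside $A$ centred at $r$ (midpoints on the edges $ry_i$), all midpoints distinct. Because $\mathcal F_{1,s,t}$ contains $K_{1,s,t}^{\text{sub}}$ together with all of its degenerations in which the midpoints of the spokes from $r$ are allowed to reuse vertices (even vertices of the $K_{s,t}^{\text{sub}}$-core), it suffices to exhibit in $G_0$ distinct vertices $r,y_1,\dots,y_s,z_1,\dots,z_t$ such that (i) every pair $y_iz_j$ has a common neighbour and these $st$ common neighbours can be chosen distinct, and (ii) every core vertex $v\in\{y_1,\dots,y_s,z_1,\dots,z_t\}$ has a common neighbour with $r$. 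From such a ``root configuration'' one reads off a member of $\mathcal F_{1,s,t}$, filling in the remaining midpoints greedily; I would guarantee (i) by selecting the $z_j$'s from a pool large enough that any collisions among the forced common neighbours can be removed by re-selection.

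To produce the root configuration I would argue in two counting steps. First, counting homomorphic copies of the subdivided star $K_{1,s}^{\text{sub}}$ gives at least $\gtrsim n d^{2s}$ of them, almost all injective after the cleaning; each determines an $s$-set $Y$ with a vertex in its link set $L(Y):=\{u:N(u)\cap N(y_i)\ne\varnothing\text{ for all }i\}$, and since $\sum_{|Y|=s}|L(Y)|=\sum_u\binom{|N_2(u)|}{s}\gtrsim n d^{2s}$, averaging over the at most $\binom ns$ choices of $Y$ yields an $s$-set $Y^{*}$ with $|L(Y^{*})|\ge\Lambda$ for a threshold $\Lambda$ that is a prescribed power of $n$. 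Second, working inside $W:=L(Y^{*})$: the number of length-$2$ paths with both ends in $W$ is $\gtrsim |W|^2d^2/n$, and comparing this with the number of $2$-adjacent pairs inside $W$ — bounded via Cauchy--Schwarz against the cleaned $C_4$-count — forces some $r\in W$ to be $2$-adjacent to $\Omega(st\,d)$ vertices of $W$ once $\Lambda$ exceeds the relevant power of $n$. Taking $t$ of those vertices as $z_1,\dots,z_t$ and $Y^{*}=\{y_1,\dots,y_s\}$ gives a root configuration: each $z_j$ and each $y_i$ lies in $L(Y^{*})$, hence is $2$-adjacent to all of $Y^{*}$; each $z_j$ is $2$-adjacent to $r$; and $r\in L(Y^{*})$ is $2$-adjacent to every $y_i$. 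The value $\alpha=\tfrac12-\frac{s-1}{2s(s+1)}$ emerges as the point at which the lower bound on $\Lambda$ from the first step meets the requirement on $|W|$ imposed by the second; I would carry out this optimisation (and track the lower-order losses from degeneracies) but not dwell on it.

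The main obstacle is the control of the codegree distribution, since the two counting steps above are clean only when codegrees are not too concentrated. In general one must run a dichotomy: either some vertex or pair has unusually large codegree, in which case a much richer subdivided structure — indeed a whole member of $\mathcal F_{1,s,t}$ — can be found directly; or the codegrees are spread, and then one must bound the loss incurred in passing from homomorphism counts to counts of distinct configurations and in extracting the apex together with the distinct midpoints. This is precisely the regime handled for even cycles and theta graphs in bipartite hosts with unequal parts by Naor--Verstra\"ete~\cite{naor2005note} and by Jiang--Ma--Yepremyan~\cite{jiang2022turan}: one repeatedly passes to dense sub-incidences between $W$ and the midpoint layer until a K\H{o}v\'ari--S\'os--Tur\'an/convexity estimate applies. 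Making this bookkeeping close with the exponent exactly $\alpha$ — and uniformly in the asymmetry $s\le t$, so that allowing $t$ to be large costs only constants — is the heart of the argument.
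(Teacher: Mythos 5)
There is a genuine gap: the two-step counting you propose does not reach the claimed exponent. Carrying out your step 1, one has $\sum_{|Y|=s}|L(Y)|\gtrsim n\,d^{2s}$, so averaging over the $\binom{n}{s}$ choices yields only $\Lambda\gtrsim d^{2s}/n^{s-1}$. Your step 2 needs some $r\in W$ with $\gtrsim|W|\,d^{2}/n$ two-step neighbours inside $W$ to exceed $\Theta(d)$, i.e.\ it needs $|W|\gtrsim n/d$. Equating $d^{2s}/n^{s-1}=n/d$ gives $d^{2s+1}\geq n^{s}$, i.e.\ $d\geq n^{s/(2s+1)}$, which is Janzer's old threshold, not the asserted $n^{(s^2+1)/(2s(s+1))}$. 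Indeed
\begin{equation*}
\frac{s}{2s+1}-\frac{s^2+1}{2s(s+1)}=\frac{s^2-2s-1}{2s(s+1)(2s+1)}>0\quad\text{for }s\geq 3,
\end{equation*}
so your optimisation, if carried out, would land on the weaker exponent. The loss comes entirely from the averaging over all $\binom{n}{s}$ candidate $s$-sets.

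The idea you are missing — and the crux of the paper's argument — is to reverse the order of selection and localise everything inside a small set before any averaging happens. The paper first fixes a single apex vertex $x$ (this will play the role of the size-$1$ part $r$), passes to its blue/two-step neighbourhood $A_1=N_b(x)$, which by Lemma~\ref{Janzer_blue} has size $\Theta(\delta^2)$, and only then searches for the $s$-tuple $\{y_1,\dots,y_s\}$ and the $z_j$'s entirely inside $A_1$. This replaces the factor $\binom{n}{s}$ by $\binom{\delta^2}{s}$ in the averaging, a gain of $(n/\delta^2)^s$. Combined with a second application of Lemma~\ref{Janzer_blue} inside $A_1$ (to get $\Omega(\delta^6/n)$ blue edges), the proper-$s$-star count of Lemma~\ref{proper_improper}, and a pigeonhole on a single vertex $b\in B$ through which the spread of non-red $s$-cliques must pass (Proposition~\ref{good_pair_in_Nbx}), this gives precisely the improved exponent. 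Your plan, by choosing $Y^{*}$ globally and locating the apex $r$ only afterwards inside $L(Y^{*})$, cannot exploit this localisation. Incidentally, the Naor–Verstra\"ete and Jiang–Ma–Yepremyan machinery you invoke is what the paper uses for Theorem~\ref{sharp}; Theorem~\ref{family_version} instead runs through Janzer's red/blue colouring scheme, where codegree concentration is handled automatically via Tur\'an's theorem on red edges rather than by a cleaning dichotomy.
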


Note that for all $s\geq 3$,  $\frac{s}{2s+2} < \frac{s^2+1}{2s+2s^2} <\frac{s}{2s+1} $,
and $\frac{s}{2s+1}$ was the best previous upper bound for this problem,
given by Janzer~\cite{janzer2018improved}.
Our result improves this upper bound.
We suspect the same upper bound is in fact true for $K_{1,s,t}^{\text{sub}}$,
which leads to the following conjecture.

\begin{conjecture}
For any integers $3\leq s \leq t$,
	\begin{equation}
\text{ex}(n,K_{1,s,t}^{\text{sub}})=O(n^{1+\frac{s^2+1}{2s+2s^2}}).
		\end{equation}
\end{conjecture}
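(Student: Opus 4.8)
\medskip
\noindent\textbf{Towards the Conjecture.} We would approach the upper bound via Janzer's scheme for $1$-subdivisions~\cite{janzer2018improved}, run so as to track the degeneracies that distinguish $K_{1,s,t}^{\text{sub}}$ from the remaining members of $\mathcal F_{1,s,t}$. Set $\alpha=\frac{s^{2}+1}{2s+2s^{2}}$, and write $p_{i}$, $q_{j}$, $b_{ij}$ for the subdivision vertices of $K_{1,s,t}^{\text{sub}}$ lying on the edges $ry_{i}$, $rz_{j}$, $y_{i}z_{j}$ respectively. Suppose $G$ is $K_{1,s,t}^{\text{sub}}$-free on $n$ vertices with $e(G)\geq Cn^{1+\alpha}$ for a large constant $C$; we aim for a contradiction. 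First pass to a near-regular subgraph, still called $G$, of minimum degree $d\gg n^{\alpha}$ (losing at most logarithmic factors, and arranging some control on the maximum codegree). Fix a large constant $\tau=\tau(s,t)$ and let $H_{\tau}$ be the $\tau$-heavy codegree graph on $V(G)$, where $uv$ is an edge precisely when $c(u,v):=|N(u)\cap N(v)|\geq\tau$. The crucial observation is that a copy of $K_{1,s,t}$ inside $H_{\tau}$ lifts to an honest copy of $K_{1,s,t}^{\text{sub}}$ in $G$: process the $s+t+st$ edges of $K_{1,s,t}$ one at a time, and for each pick a common $G$-neighbour of its two endpoints avoiding the constantly many vertices chosen so far. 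So the task reduces to locating a copy of $K_{1,s,t}$ in $H_{\tau}$.

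To do that we would estimate the codegree-weighted count of homomorphisms from $K_{1,s,t}^{\text{sub}}$ to $G$, namely
\[
\Phi \;=\; \sum_{r,\,y_{1},\dots,y_{s},\,z_{1},\dots,z_{t}}\ \prod_{i=1}^{s} c(r,y_{i})\ \prod_{j=1}^{t} c(r,z_{j})\ \prod_{i,j} c(y_{i},z_{j}),
\]
where the sum runs over all tuples of vertices of $G$. Near-regularity and a H\"older/convexity argument (or the Sidorenko-type property of subdivisions) give a lower bound $\Phi\geq F(n,d)$ with $F$ explicit and increasing in $d$. We then split $\Phi$ into the \emph{good} tuples — those in which $r,y_{1},\dots,y_{s},z_{1},\dots,z_{t}$ are distinct and all $s+t+st$ weights are at least $\tau$, so that by the lifting above we are already done — and the \emph{bad} tuples, where either two of the vertices $r,y_{1},\dots,y_{s},z_{1},\dots,z_{t}$ coincide, or some weight $c(\cdot,\cdot)$ is positive but below $\tau$. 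Each type of bad tuple is controlled by a codegree-weighted homomorphic count of a proper collapse or truncation of $K_{1,s,t}^{\text{sub}}$, which we would bound dyadically in the codegree, using the degree-localized counting for unbalanced host graphs of Naor--Verstra\"ete~\cite{naor2005note} and its theta-graph extension by Jiang--Ma--Yepremyan~\cite{jiang2022turan} to dispose of the pairs of abnormally large codegree. If the total bad contribution can be made $o(F(n,d))$ once $d\gg n^{\alpha}$, then some good tuple survives and the proof is complete.

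The step that we expect to be the main obstacle — and the reason this is still only a conjecture — is controlling the bad tuples in which a weight $c(r,y_{i})$ or $c(r,z_{j})$ is small but positive, so that the corresponding connector $p_{i}$ or $q_{j}$ can no longer be routed away from the other connectors and from the vertices $b_{i'j'}$. These are precisely the configurations that are absorbed by passing to the family $\mathcal F_{1,s,t}$ in Theorem~\ref{family_version}; at the critical density $n^{1+\alpha}$ they seem to be of the same order of magnitude as the good tuples, and the crude bound ``delete one constraint from $\Phi$'' is far too lossy in the relevant regime $d\ll n^{1/2}$, so a better, codegree-localized estimate tailored to these collapses is needed. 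One might instead try to localize at the apex, fixing a typical $r$ and seeking a $K_{s,t}$ in $H_{\tau}$ restricted to the heavy second neighbourhood $V_{2}(r)=\{w:c(r,w)\geq\tau\}$ (such a $K_{s,t}$ lifts directly to $K_{1,s,t}^{\text{sub}}$); but $H_{\tau}$ is far from dense, so one cannot simply appeal to triangle counts or to the K\H{o}v\'ari--S\'os--Tur\'an bound inside $V_{2}(r)$, and making the exponents match up at exactly $\alpha=\frac{s^{2}+1}{2s+2s^{2}}$ will require the same delicate two-scale bookkeeping.
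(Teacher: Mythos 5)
This statement is an open conjecture in the paper, not a theorem; the authors say explicitly that ``more ideas are needed'' to prove it for the single graph $K_{1,s,t}^{\text{sub}}$ rather than the degenerate family $\mathcal{F}_{1,s,t}$. There is no paper proof to compare against, and your write-up, which candidly stops short of a complete argument, is best read as a discussion of why the conjecture is hard. Your diagnosis of the obstruction is correct and matches the paper's: the problematic configurations are exactly those in which an apex connector $p_i$ or $q_j$ is forced to coincide with another connector or with some $b_{i'j'}$, and these are precisely what is absorbed by passing to the family $\mathcal{F}_{1,s,t}$ (Theorem~\ref{family_version}, Figure~\ref{figure}); Theorem~\ref{thm_2} handles the subcase $(K_{s+1,t}^+)^{\text{sub}}$ because the missing apex edges give enough slack to pin the surviving connectors down.

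One framing issue is worth correcting. Your ``good'' tuples are those living entirely in the $\tau$-heavy codegree graph $H_\tau$, and you reduce the problem to locating $K_{1,s,t}$ there. But in Janzer's scheme, and in the paper's colored-graph reformulation $G_c(A)$, the red (heavy-codegree) graph is deliberately kept sparse: Tur\'an's theorem applied inside each $N_G(b)$ (Lemma~\ref{Janzer_blue}) shows it cannot contain a clique on $v(H)$ vertices, precisely because a red copy of $K_{1,s,t}$ would lift to $K_{1,s,t}^{\text{sub}}$. So in a $K_{1,s,t}^{\text{sub}}$-free host your good set is \emph{empty} by design, not merely subcritical; this is a qualitative feature of the method, not a matter of two-scale bookkeeping. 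Relatedly, your ``bad'' class conflates two very different things: genuinely degenerate tuples (vertex collisions, forced connector reuse) and the useful blue tuples (distinct vertices, small but positive codegrees), many of which are ``proper'' in the paper's sense and do yield valid embeddings. The working dichotomy is proper versus improper, tracked through the partitioning machinery of Lemma~\ref{proper_improper} and the extension argument of Proposition~\ref{End_of_Janzer} — and you are right that the improper-via-degeneracy cases are the ones nobody currently knows how to exclude at the target exponent $\alpha=\frac{s^2+1}{2s+2s^2}$.
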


In order to achieve this upper bound,
more ideas are needed. The real difficulty is to avoid the {\it degeneracy} when we try to embed the 1-subdivision of a non-bipartite non-complete graph.
By saying non-degeneracy, we mean that during the entire embedding scheme, we have to make sure the new common neighbor we want to choose at each step must not be already used before. This causes a lot of trouble when the embedding scheme proceeds (see Figure 1).
In Theorem ~\ref{thm_2}, we were able to overcome exactly this difficulty via some sophisticated counting arguments.



 The rest of the paper is organized as follows. First, in Section 2, we will recall (and slightly reformulate) Janzer’s method for estimating extremal numbers of subdivisions.
We found this version slightly easier to use than the original formulation, and believe that it could provide a useful alternative perspective on the method.
	 In Section 3, we prove Theorem~\ref{thm_2}. 
In Section 4, we prove Theorem~\ref{family_version}.

We use the notation $f(n) = \omega(g(n))$
 to refer to the condition that
  for any constant $C > 0$, there exists an integer $n_0$ such that $f(n) \geq Cg(n)$ for any $n\geq n_0$.
Similarly, $f(n) = o(g(n))$ denotes that for any constant $\varepsilon > 0$, there exists an integer $n_0$ such that $f(n) \leq \varepsilon g(n)$ for any $n\geq n_0$.
Hereafter, we assume that $n$ is a sufficiently large integer.

\section{Preliminary Considerations}
In this section, we recall some methods for embedding  $1$-subdivisions that were introduced by Janzer~\cite{janzer2018improved}, and by Conlon, Janzer and Lee~\cite{conlon2021more}.
We begin by recalling some simple reduction lemmas that we will find useful when discussing these methods.
\subsection{Standard Reductions}
In this subsection, we recall two standard lemmas, and a useful reduction lemma from ~\cite{conlon2021extremal}. The following lemma allows us to assume that our host graph is bipartite and has large minimum degree.

\begin{lemma}\label{lowerbound_degree_a}
Let $G$ be a graph with average degree $d_G=2e(G)/v(G)$. Then there exists a subgraph of $G$ which is bipartite and every vertex has degree at least $d_G/4$.
\end{lemma}
Recall also that $\textup{ex}(n,T)\leq kn$ for every tree $T$ with $k$ vertices. We will use the following lemma to control the minimum degree in each part of an unbalanced bipartite graph. Let $\delta_X(G)$ be the minimal degree
in G among vertices in a set $X\subset V(G)$.

\begin{lemma}\label{lowerbound_degree_b}
 Let $G$ be a bipartite graph with vertex bipartition $X\cup Y$, write $d_X= e(G)/ |X|$ and $d_Y= e(G)/|Y|$. Then there exists a subgraph $G'$ of $G$, on some bipartition $X'\cup Y'$, where $X'\subset X$ and $Y'\subset Y$, so that
  $$\delta_{X'}(G')\geq d_X/4,\quad \delta_{Y'}(G')\geq d_Y/4 \quad \text{and} \quad e(G')\geq e(G)/2.$$
\end{lemma}
	

A graph $G$ is called \textit{$K$-almost regular},
if every vertex $v$ has degree in some interval of the form
$[d, Kd]$.
A bipartite graph with bipartition $X\cup Y$ is called
\textit{balanced},  if $1/2\le |X|/|Y|\le 2$.
The following useful lemma will allow us to assume that our host graph is balanced and K-almost regular.

\begin{lemma}[Lemma 2.3 of~\cite{conlon2021extremal}]\label{make_assumption}
Fix $0<\alpha< 1$, and let $G$ be a graph with $n$ vertices and $e(G)= \omega( n^{1+\alpha})$.
Then $G$ has a $K$-almost regular balanced bipartite subgraph $G'$ with $e(G') =\omega( v(G')^{1+\alpha})$.
\end{lemma}

Note that the condition $e(G') =\omega( v(G')^{1+\alpha})$ implies that
$v(G') \to \infty$ as $n\to \infty$.

\subsection{Janzer's Embedding Scheme for Graph Subdivisions}
In this subsection, we introduce (and slightly extend) two embedding schemes that were introduced by Janzer~\cite{janzer2018improved}.
Let us write $N_G(v_1,\cdots,v_k)$ to denote the common neighborhood of  $v_1,\cdots,v_k$ in $V(G)$,

\begin{definition}\label{red_Blue}
Given a graph $G$ with bipartition $X\cup Y$ and $a\in \mathbb{N}$, we define an edge-colored graph $J=J(G,a)$ as follows:
set $V(J)= X$ and $E(J)=\{uv:N_G(u,v)\geq 1\}$
and for each edge of $uv\in E(J)$, define
\begin{align*}
c(u,v)=
\begin{cases}
blue,& \text{if} \:\: 1\leq N_G(u,v)\leq a \\
red,& \text{if} \:\: N_G(u,v)\geq a.
\end{cases}
\end{align*}
\end{definition}

Given a colored graph $J = J(G,a)$, we write $R$ and $B$ for the graphs formed by the red and blue edges (respectively), and say that a subgraph $F \subset J$ is red if $F \subset R$, and blue if $F \subset B$.
Similarly, we write 
$N_B(v_1,\dots,v_k)$ for the common neighborhood of vertices  of  $v_1,\cdots,v_k \in V(J)$ in the graph $B$, and $d_B(v_1,\cdots,v_k)$ the size of this common blue neighborhood.
An important property of the colored graph $J(G,a)$ is recorded in the following lemma.

\begin{lemma}\label{no_red}
 Let $G$ be a bipartite graph, let $H$ be a graph, and set $a = e(H)$. If $J(G,a)$ contains a red copy of H, then $H^{\text{sub}} \subset G$.
\end{lemma}
\begin{proof}
Fix a red copy of $H$ in $J(G,e(H))$ and let
$\{u_1v_1,\dots,u_av_a\}$ be the edges of this copy of $H$.
We now greedily choose vertices $w_i\in N_G(u_i,v_i)$ for each $i\in [a]$, with $w_1,\dots, w_a$ all distinct.
This is possible because the edges $u_iv_i$ are all red, and gives us the desired copy of $H^{\text{sub}}$ in $G$.
\end{proof}

Using Lemma~\ref{no_red}, we can deduce that if G is $H^{\text{sub}}$-free and has sufficiently many edges, then $J(G,e(H))$ has many blue edges. To do so, we will use the following simple consequence of Tur\'an's theorem: if $G$ is a $K_r$-free graph with $n > r$ vertices, then $e(G) < {n \choose 2} - cn^2$ for some constant $c = c(r) > 0$.

\begin{lemma}\label{Janzer_blue} Fix a graph $H$ and let $G$ be an $H^{\text{sub}}$-free bipartite graph with bipartition $X\cup Y$.
Assume $e(G)/\big| Y\big| >  2v(H)$.
Then $J(G,e(H))$ has at least
$ce(G)^2/|Y|$ blue edges, where $c=c_H$ is a constant depending only on $H$.
\end{lemma}
\begin{proof}
Let $J=J(G,e(H))$ and let $y\in Y$.
Note that, since every pair in $N_G(y)$ has $y$ as a common neighbor $y$, the induced subgraph $J[N_G(y)]$ is a colored complete graph.
By Lemma~\ref{no_red}, there is no red $H$ in $J[N_G(y)]$ and consequently no red $K_{v(H)}$.
As noted above, by Tur\'an's theorem there exists a $c'=c'(H)>0$ such that if $d_G(y)> v(H)$,  then $J[N_G(y)]$ has at least $c'd_G(y)^2$ blue edges.

Note that at least half of the edges of $G$ are incident to vertices in Y of degree strictly greater than $v(H)$.
The number of pairs $(y,e)$ where $y\in Y$ and $d_G(y)>v(H)$ and $e$ is a blue edge of $J[N_G(y)]$ is therefore at least
\begin{align}\label{total_count}
              \sum_{v\in Y: d_G(y)> v(H)} c'd_G(y)^2
         \geq c'\bigg(\sum_{v\in Y:d_G(y)> v(H)} d_G(y) \bigg)^2/|Y|\geq  \frac{c'e(G)^2}{4|Y|},
 \end{align}
where the first inequality is obtained by using the  Cauchy-Schwarz inequality.
As each blue edge can only be counted $e(H)-1$ times in~\eqref{total_count}, it follows that the total number of blue edges is at least
$ \frac{c'e(G)^2}{4(e(H)-1)|Y|}.
$
Thus $J$ has at least $ce(G)^2/|Y|$ blue edges, where $c=c'/4(e(H)-1)$ is a constant depending only on $H$, as required.
\end{proof}

The following definition illustrates the embedding process based on the colored graph $J(G,a)$. Recall that if $uv$ is an edge in colored graph $J(G,a)$, then the vertices $u$ and $v$ have at least one common neighbor.

\begin{definition}\label{sbsgr}
Let $G$ be a graph with bipartition $X\cup Y$ and let $a\in \mathbb{N}$.
We call a subgraph $F$ of $J(G,a)$ \emph{helpful} if for all edges $uv$ in $F$, it is possible to choose a distinct common neighbour of $\{u,v\}$ in set $Y$, thus enabling an embedding of a copy of $F^{\text{sub}}$ into the graph $G$.
\end{definition}

In particular, the existence of an helpful copy $H$ in $J(G,a)$  implies that $H^\text{sub}$ embeds into $G$.
Note that, by Lemma~\ref{no_red}, every red copy of $H$ in $J(G,e(H))$ is helpful.

We summarize one more idea from~\cite{janzer2018improved} as a proposition below.
Given a graph $F$ and an integer $t$, let $K_{F,t}$ denote the graph formed by a copy of $F$ and an independent set $T$ with $t$ vertices, where $V(K_{F,t})=V(F)\cup V(T)$ and $$E(K_{F,t})=E(F)\cup \{(u,v):u\in V(F),v\in V(T)\}.$$
In particular, note that $L_{s,t}$ is equal to $K_{F,t}$ with $F=K_{s-1}$,
Janzer (see Theorem 4 in \cite{janzer2018improved}) showed that if the colored graph has a helpful blue $K_{s-1}$ with certain properties, then there exists a helpful blue $L_{s,t}$.
We need to generalize the result to an arbitrary graph $F$; fortunately, this follows from essentially the same proof.
In our case, we require $F$ to be an arbitrary graph.

Observe that if a graph $H$ has an independent set $T$ of size $t$, then $H$ is a subgraph of $K_{F,t}$ for some graph $F$.
Our strategy will be to first find a helpful blue copy of $F$ with certain extra properties, and then extend this to a helpful blue copy of $K_{F,t}$ using the following proposition.


\begin{prop}[See Theorem 4 of~\cite{janzer2018improved}]\label{End_of_Janzer}
Given a graph $F$, an integer $t\in \mathbb{N}$ and $K\geq 1$, there exists a constant $c=c(F,t,K)>0$ such that the following holds. Let $G$ be a $K$-almost regular bipartite graph with bipartition $X\cup Y$.
 If $J=J(G,e(K_{F,t}))$ contains a helpful blue copy of $F$ with
 $V=V(F)$, such that the following conditions are satisfied:
\begin{enumerate}
   \vspace{0.5mm}
   \item  $J[V]$ is blue;
 \item  $d_B(V)\geq c\delta(G)$,
\end{enumerate}
then we can find a set  $Z\subset N_B(V)$ of size $t$, such that $J[V\cup Z]$ contains a helpful blue copy of $K_{F,t}$.
\end{prop}

\begin{proof}
Let $V=\{x_1,\cdots,x_s\}$ and let $\delta(G)=d$, $c=K\big({s\choose 2} + (s-1)t\big)(e(K_{F,t})-1)+1$.
The assumption (1) ensures
each pair of $V$ is either blue or non-colored.
In particular, for any pair $\{x_i,x_j\}$,  $|N_G(x_i,x_j)|\leq e(K_{F,t})-1$ by definition.
Considering all pairs in $V$, the union
\begin{equation}
Y^{(0)}:=\bigcup_{1\leq i< j \leq s} N_G(x_i,x_j)
\end{equation}
has a size at most
${s\choose 2}\big(e(K_{F,t})-1\big)$.
Since $G$ is $K$-regular, the total number of vertices in $X$ that are adjacent to some vertex in $ Y^{(0)}$ is at most $Kd |Y^{(0)}|$.
By removing these vertices from $N_B(V)$, we obtain a set
$$Z^{(0)}:=N_B(V)\backslash \bigcup_{w\in Y^{(0)}}N_G(w),$$
and $Z^{(0)}$ is not empty since $d_B(V)\geq cd$, by our choice of $c$.

Let $z_1\in Z_0$, and note that  $N_G(z_1)\cap Y^{(0)}$ is empty.
Moreover, since $z_1\in N_B(V)$, there is a blue copy of $K_{F,1}$ with vertex set $V\cup z_1$.
We claim this blue copy of $K_{F,1}$ is helpful. To see this, suppose first that the pairs $\{z_1,x_i\}$ and $\{z_1,x_j\}$
share a common neighbor $y$ for some $1\leq i, j\leq s$.
Then $y\in N_G(z_1,x_i,x_j)\subset N_G(z_1)\cap Y^{(0)}$, contradicting our choice of $z_1$.
By the same argument, the pairs $\{z_1,x_i\}$ and $\{x_i,x_j\}$ cannot share a common neighbor. Since we assumed that $F$ is helpful, it follows that $K_{F,1}$ is helpful, as claimed.

In other words, we have successfully identified a helpful blue copy of $K_{F,1}$ in $J[V\cup\{z_1\}]$.
Now, let's assume that we have already located a helpful blue $K_{F,m}$ in $X$ along with additional vertices denoted as $z_1,z_2,\cdots,z_m$ for $m<t$.
We proceed by defining $Y^{(m)}$ as follows:
$$Y^{(m)}:=Y^{(m-1)}\cup\bigg( \bigcup_{1\leq i\leq s}N_G(z_m,x_i)\bigg)=Y^{(0)}\cup\bigg( \bigcup_{1\leq k\leq m,1\leq i\leq s}N_G(z_k,x_i)\bigg),$$
The size of $Y^{(m)}$ is at most $\big({s\choose 2} + sm\big)(e(K_{F,t})-1)$.
Since $G$ is $K$-regular, the set of vertices that are adjacent to some vertex in $Y^{(m)}$ is bounded by $Kd|Y^{(m)}|$. By removing these vertices from $N_B(V)$, we obtain
$$Z^{(m)}:=N_B(V)\backslash \bigcup_{w\in Y^{(m)}}N_G(w).$$
Note that the  size of $Z^{(m)}$ is still not empty by our choice of $c$.

Now, we can select the vertex $z_{m+1}$ from $Z^{(m)}$.
Then $Y^{(m)}\cap N_G(z_{m+1})$ is empty.
As $z_{m+1}x_i$ is blue for each $i$, we obtain a blue $K_{F,{m+1}}$. Furthermore,
 we claim the pairs $\{z_{m+1}, x_i\}$ and $\{z_k, x_j\}$ do not share any common neighbour, for all $k\in [m]$ and $i,j\in [s]$.
Otherwise, we would find a vertex
$$y\in N_G(z_{m+1},z_k,x_i,x_j)\subset Y^{(k)} \cap N_G(z_{m+1}), $$
which is a contradiction since $Y^{(k)} \subset Y^{(m)}$.
By adding $z_{m+1}$ to embedded vertices and blue edges $\{z_{m+1},x_i\}$ for all $i\in[s]$ to embedded edges, we successfully find a helpful blue $K_{F,m+1}$.
Note that $$d_B(V)\geq cd >
K\bigg({s\choose 2} + (s-1)t\bigg)(e(K_{F,t})-1)d\geq Kd|Y^{(t-1)}|,$$ we can continue this process iteratively until we achieve a helpful blue copy of $K_{F,t}$.\end{proof}

In order to find a set $V$ satisfying condition (1) of Proposition~\ref{End_of_Janzer}, we will use the following simple supersaturation lemma from~\cite{conlon2021more}.
In the proof of the following lemma, we need use the fact that the Ramsey number $R(k,s)$,
which is defined to be the minimum integer $n$  such that any red-green edge-coloring of the complete graph on $n$ vertices  contains either a red clique of size $k$ or a green clique of size $s$, is finite.

\begin{lemma}\label{Ramsey_stuff}
Fix $k,s\in \mathbb{N}$,
and let $J$ be a red-blue edge-colored graph. If $J$ does not have any red clique of size $k$, then every sufficiently large subset $X\subset V(J)$ contains $\Omega(|X|^s)$ sets $X_1\subset X$ of size $s$ such that $J[X_1]$ is blue.
\end{lemma}

\begin{proof}
Recoloring blue edges and non-adjacent pairs green, we obtain a red-green edge coloring of $K_{v(J)}$.
Then by the definition of Ramsey number,
 for any subset $X_1\subset X$ with $|X_1|=R(k,s)$,  in $J[X_1]$ there is either a red $k$-clique, or a green $s$-clique. Since $J[X_1]$ does not have any red $k$-clique, we can conclude that $J[X_1]$ contains a green $s$-clique.
 Note that each green $s$-clique is contained in
${|X|-s \choose R(k,s)-s}$
distinct $R(k,s)$-subsets of $X$ and each green $s$-clique induces a blue $J[V]$.
So we find a family $\mathcal X$ consisting of such green $s$-cliques, and
\begin{equation}
|\mathcal{X}| \geq {|X|\choose R(k,s)} \Big/ {|X|-s \choose R(k,s)-s} = \Omega(|X|^s).
\end{equation}
The proof is completed.
\end{proof}

\section{Proof of Theorem \ref{thm_2}}
In this section we prove Theorem~\ref{thm_2}. We borrow some arguments from section 4 in~\cite{conlon2021more}, combined with additional new ideas.

The reduction Lemma~\ref{make_assumption} allows
us to assume our host graph $G$ satisfies several conditions below.
$G$ has $n$ vertices, has a balanced vertex bipartition $X\cup Y$.
Moreover, $G$ is $\mathcal (K_{s+1,t}^+)^{\text{sub}}$-free, and $K$-almost regular with a minimum degree satisfying
\begin{equation}\label{degree_condition_again}
d=\omega ( n^{(s^2+1)/(2s+2s^2)}).
\end{equation}
On set $X$,
we define the colored graph $J=J(G,e(K_{s+1,t}^+))$ following Definition~\ref{red_Blue}.

We will utilize Lemma~\ref{Janzer_blue} twice.
Firstly,
within the bipartition $X\cup Y$, the average degree condition is satisfied as $d|X|/|Y|=\Omega(d)=\omega(1)$.
This enables us to identify a vertex $x\in X$, along with its blue neighborhood $X_1=N_B(x)$, containing $\Theta(d^2)$ vertices.
The lower bound is justified by Lemma~\ref{Janzer_blue}, while the upper bound is supported by the property of $G$ being $K$-almost regular.
Secondly, in the bipartition $X_1 \cup Y$,
 the average degree condition still holds true, with $d |X_1|/|Y|=\Omega( d^3/n)=\omega(1)$.
Applying Lemma~\ref{Janzer_blue} again, we conclude that
the number of blue edges in $J[X_1]$ is $\Omega(d^6/n)$.

We stress that the vertex $x$ in the previous paragraph is fixed from now on.

Let us write $K_{u_1,\cdots,u_s}^v$ to represent a copy of $s$-star
centered at $v$ with leaves $u_1, u_2,\cdots$ and $u_s$.
The following lemma gives the correct count
for the total number of the  helpful blue $s$-stars $K^y_{u_1,\cdots,u_s}$ found in $J[X_1]$,
with the additional property that
 the star $K^{x}_{u_1,\cdots,u_s}$ is also blue and helpful.

\begin{lemma}\label{super_helpful}
Define $\mathcal P$ to be the family  of $(s+1)$-tuples, consisting of
$(y,u_1,\cdots,u_s)\in X_1^{s+1}$ with the following properties.
\begin{enumerate}[(i)]
\item $K^y_{u_1,\cdots.u_s}$ is a blue and helpful $s$-star.
\item $K^{x}_{u_1,\cdots,u_s}$ is a blue and helpful $s$-star.
\end{enumerate}
Then $|\mathcal P|=\Omega(d^{4s+2}/n^s)$.
\end{lemma}

\begin{proof} 
Let us write $d_B^{X_1}(y)=|N_B(y) \cap X_1|$, the size of the blue neighborhood of vertex $y$ in $J[X_1]$.
The number of blue edges in $J[X_1]$ can then be represented as $\bigg(\sum_{y\in X_1} d_B^{X_1}(y)\bigg)/2$, and this quantity is $\Omega(d^6/n)$.

For some small constant $\varepsilon$, we write
\begin{equation}\label{sigma_good_y}
I =\{y\in X_1 \big| d_B^{X_1}(y) \geq \varepsilon d^4/n \}.
\end{equation}
Given that $|X_1|=\Theta ( d^2)$, we can conclude that $\sum_{y\in X_1\setminus I} d_B^{X_1}(y) \leq \varepsilon d^6/n$.
Subsequently, the total number of blue edges inside $X_1$ connected to vertices in $I$ remains of the same order, that is,
\begin{equation}\label{good_y_sum}
\sum_{y\in I} d_B^{X_1}(y) = \Omega(d^6/n).
\end{equation}

Let us recall that $N_B(x,y)$ denotes the common blue neighborhood of $x$ and $y$ in $X$.
For any $y\in I$,
we see that $|N_B(x,y)|=d_B^{X_1}(y)\geq\varepsilon d^4/n $.
Now, we are going to search for $s$-tuples $(u_1,\cdots,u_s)$ that, together with
$y$, satisfy both $(i)$ and $(ii)$.

We list both $N_G(x)=\{b_{x,1},\cdots,b_{x,k_x}\}$ and $N_G(y)=\{b_{y,1},\cdots,b_{y,k_y}\}$ in such a way that the common neighbors of $x$ and $y$ appear at the beginning in both lists.
In other words, there exists some $k'\leq \min \{k_x,k_y\}$ such that, for $i=1,\cdots,k'$, $b_{x,i}=b_{y,i}$.
For all $1\leq i\leq k_x, 1\leq j\leq k_y$, we define the sets $W_{i,j}=\big( N_G(b_{x,i}) \cup N_G(b_{y,j}) \big) \cap N_B(x,y)$.
It is obvious that $\bigcup_{i=1}^{k_x} \bigcup_{j=1}^{k_y} W_{i,j}\subseteq N_B(x,y).$
Conversely, each vertex $v$ in $N_B(x,y)$ shares at least one common neighbor with $x$ and $y$, respectively.
Therefore, for some $i$ and $j$,
vertex $v$ belongs to $N_G(b_{x,i})$ and $N_G(b_{y,j})$, both of which are a subset of $W_{i,j}$. Consequently, we conclude that \begin{equation}
N_B(x,y)=\bigcup_{i=1}^{k_x} \bigcup_{j=1}^{k_y} W_{i,j}.
\end{equation}
Now we re-order these sets such that $W_{1,1},W_{2,2}\cdots,W_{k',k'}$ appear first, and proceed to eliminate duplicate sets and empty sets one by one.  For the first $k'$ sets $W_{i,i}$, $1\leq i \leq k'$, let us assume that there are $k^\ast$ sets left. This allows us to write a family $\mathcal W$ consisting of these re-ordered sets as follows:
 \begin{equation}
 \mathcal W=\{U_1,\cdots, U_{k}\},\ k\geq k^{\ast},
 \end{equation}
 Note that for each $i\leq k^{\ast}$, $U_i$ corresponds to $W_{j,j}$ for some $j$, and it follows that $b_{x,j}=b_{y,j}$, consequently $U_i=N_G(b_{x,j})  \cap N_B(x,y)= N_G(b_{y,j}) \cap N_B(x,y).$
 Moreover, deleting the duplicate sets and empty sets will not change the union, so we still have
  \begin{equation}
  N_B(x,y)=\bigcup_{i=1}^{k} U_i.
 \end{equation}

{\bf Claim:}
There exists a partition $\bigcup_{j=1}^\ell P_j$ of $N_B(x,y)$ with the following properties:
\begin{enumerate}
\item $\ell= \Theta(d_B^{X_1}(y)/d)$;
\item for each $1\leq j\leq \ell$, $|P_j|=\Theta(d)$;
\item for any $u_1,\cdots,u_s$ taken from sets $P_{i_1}, \cdots, P_{i_s}$,
with $1\leq i_1<\cdots<i_s\leq \ell$, both $K^x_{u_1,\cdots,u_s}$
and $K^y_{u_1,\cdots,u_s}$ are helpful.
\end{enumerate}
{\it \noindent Proof of the Claim:}
We show the claim via an induction process.
First, we observe that each element $U_i$ in the family $\mathcal W$ is a set with size at most $2Kd$. This is because $U_i$ is essentially a subset of a union of the neighborhoods of two vertices in $Y$.
Now, we update $U_1=U_1$ and $U_i= U_i\setminus \bigg(\bigcup_{j=1}^{i-1} U_j\bigg)$ for $i=2,\cdots,k$.
This ensures that the sets $U_i$ are disjoint from each other.
With this setup, $\bigcup_{i=1}^k U_i$
forms a partition of $N_B(x, y)$, and the size of each part in this partition is at most $2Kd$.

We can choose some $k_1$
such that $P_1=\bigcup_{j=1}^{k_1}U_j$ has size $[\frac 12 Kd, \frac 52 Kd]$.
Now any vertex $v\in N_B(x,y) \backslash P_1$ connects to some $b\in B$ which is not the common neighbor of any vertex in $P_1$ and $x$ (or $y$).
Inductively, suppose we have found disjoint sets
 $P_1,\cdots,P_{t}$
 such that each of them has size in the interval $[\frac 12 Kd, \frac 52 Kd]$,
 $\bigcup_{i=1}^t P_i= \bigcup_{i=1}^{k_t} U_{i}$ for some $k_t$,
 and any $u_1\in P_{i_1}, \cdots, u_s\in P_{i_s}$ for any sequence
 $1 \leq i_1 <\cdots <i_s \leq t$
make both $K^x_{u_1\cdots,u_s}$ and $K^y_{u_1,\cdots, u_s}$ helpful.
Now we are dealing with two cases, namely, $k_{t} \leq k^\ast$ or $k_{t}>k^\ast$.
However, there is no difference in both cases, that is,
any vertex $z_{t}$ in $\bigcup_{j=k_t+1}^k U_j= N_B(x,y) \backslash \bigcup_{j=1}^{t}P_j$
does not share any common neighbor with
any vertex in $\bigcup_{j=1}^{t}P_j$.

Now we can find some $k_{t+1}$ such that the set
$P_{t+1}=\bigcup_{i=k_t+1}^{k_{t+1}}U_i$ has size in
$[\frac 12 K d, \frac 52 Kd]$.
If not, which means that the number of the rest vertices is at most
$\frac 12 Kd$, we include all the rest vertices into $P_{t+1}$, define $\ell=t+1$, and the process terminates.
If the rest of the vertices is more than $\frac{1}{2}K d$, we continue with the process. Eventually we will terminate since there are only finitely many vertices.
Now $\bigcup_{j=1}^\ell P_j$ is a partition of $N_B(x,y)$ satisfying (2) and (3), and $\ell=d_B^{X_1}(y)/\Theta(d)=\Theta(d_B^{X_1}(y)/d)$.
\qed

After establishing the claim, we can proceed with the counting.
We have a list of $\ell$ pairwise disjoint sets in $N_B(x,y)$, each of which contains $\Theta(d)$ vertices.
We choose $s$ sets
$P_{i_1}, \cdots, P_{i_s}$ from this list, which can be done in $\ell \choose s$ ways.
And within each of these selected $s$ sets, we can choose arbitrary element $u_i\in P_i$ arbitrarily.
Therefore, there are at least $\Omega \Big( {\ell \choose s}   \times d^s \Big)=\Omega \big((d_B^{X_1}(y))^s \big)$ many $s$-tuples $(u_1,\cdots,u_s)$
making both
$K^x_{u_1,\cdots,u_s}$ and
$K^y_{u_1,\cdots,u_s}$ helpful.

To sum up, we have shown that for each $y\in I$, the total number of $s$-tuples satisfying (i) and (ii) is $\Omega\big((d_B^{X_1}(y))^s\big)$.
Recalling (\ref{sigma_good_y}) and (\ref{good_y_sum}), and applying Jensen's inequality,
it follows that the number of $(s+1)$-tuples  $(y,u_1,\cdots,u_s)$ satisfying both (i) and (ii) is
\begin{equation}
\Omega\bigg(\sum_{y\in I} d_B^{X_1}(y)^s\bigg)=
\Omega\bigg( |I| \bigg(\frac{1}{|I|}\sum_{y\in I} d_B^{X_1}(y)\bigg)^s \bigg)=\Omega\bigg(d^{4s+2}/n^s\bigg).
\end{equation}
The proof is completed.
\end{proof}

Below, for any $s$-tuple $(u_1,\cdots,u_s) \in X_1^s$,
 let $P'_{u_1,\cdots,u_s}(X_1)$  be the set of vertices $y\in X_1$
such that both properties (i) and (ii) in Lemma~\ref{super_helpful} hold true.

\begin{lemma}\label{many_s_cliques_2}
Suppose there exists an $s$-tuple
$(u_1,\cdots,u_s)\in X_1^s$ so that $|P'_{u_1,\cdots,u_s}(X_1)|= \omega(1)$.
Then we can find a vertex $b\in B$ and
a family $\mathcal V$ of $s$-tuples in $X_1^s$ with $|\mathcal V|=\Omega(|P'_{u_1,\cdots,u_s}(X_1)|^s)$, such that every $\{v_1,\cdots,v_s\}$ in $\mathcal V$ forms a non-red $s$-clique in $J[X_1]$,
 and one of the following two cases happens:
\begin{enumerate}
\item Either $\{v_1,\cdots,v_s, x\} \subset N_G(b)$.
\item Or there is a $k\in \{1,\cdots.s\}$, such that $\{v_1,\cdots,v_s, u_k\} \subset N_G(b)$.
\end{enumerate}
\end{lemma}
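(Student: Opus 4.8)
The plan is to mimic the argument inside the proof of Proposition~\ref{good_pair_in_Nbx}, but being careful that we now have the extra root vertex $x$ playing the role of the single $s$-part vertex in $K_{s+1,t}^+$. Write $P' = P'_{u_1,\cdots,u_s}(A_1)$ for brevity. First I would exploit the hypothesis that there is no proper copy of $K_{s+1,t}^+$ in $G_c(A)$. Recall that every $y\in P'$ has the property that both $K^x_{u_1,\cdots,u_s}$ and $K^y_{u_1,\cdots,u_s}$ are blue and proper; so picking any $y$ makes a blue proper $K_{s,2}$ on parts $\{u_1,\cdots,u_s\}$ and $\{x,y\}$. I would greedily grow a maximal set $Y=\{y_1,\ldots,y_p\}\subseteq P'$ such that the colored complete bipartite graph $K_{s,p+1}$ on parts $\{u_1,\cdots,u_s\}$ and $\{x,y_1,\ldots,y_p\}$, together with all the blue edges, is proper. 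Here the vertex $x$ is always included in the second part — this is exactly what encodes the ``$r$ joined to all $t$-part vertices'' structure of $K_{s+1,t}^+$, where $\{u_1,\ldots,u_s\}$ will be the $t$-part and $x$ the extra vertex $r$. Since $G$ is $(K_{s+1,t}^+)^{\text{sub}}$-free, the process must stop with $p+1 < t$, i.e. $p \leq t-2 = O(1)$.

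Next, for any $y \in P' \setminus Y$, maximality forces $y$ to share a common neighbor in $B$ with one of the pairs $(u_i, y_j)$ or with one of the pairs $(u_i, x)$. By pigeonhole over the $O(sp + s) = O(1)$ many such pairs, there is a single pair — say $(u_k, w)$ where $w \in \{x\}\cup Y$ — and a single vertex $b\in B$, such that a set $X \subseteq P'$ with $|X| = \Omega(|P'|)$ satisfies $X \cup \{u_k\} \subseteq N_G(b)$ and $w \in N_G(b)$. If $w = x$ we are in Case (1) territory (with $\{u_k\}\cup X \cup \{x\}\subseteq N_G(b)$), and if $w = y_j\in Y$ we are in Case (2) territory; in either case $X \cup \{u_k\} \subseteq N_G(b)$ and $|X|=\Omega(|P'|)$. (One subtlety: in Case (1) I actually want $\{v_1,\ldots,v_s,x\}\subseteq N_G(b)$, so I should track that $x\in N_G(b)$ as well, which is automatic since $x$ and $u_k$ share the neighbor $b$; and in Case (2) the conclusion $\{v_1,\ldots,v_s,u_k\}\subseteq N_G(b)$ is immediate once the $v_i$ are chosen from $X$.)

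Now apply Proposition~\ref{Ramsey_stuff} with $n_0 = s+t$ (or whatever constant makes a red $n_0$-clique impossible — a red $n_0$-clique would yield a proper red, hence proper colored, copy of $K_{s+1,t}^+$, contradicting $(K_{s+1,t}^+)^{\text{sub}}$-freeness) and $A' = X$: this produces $\Omega(|X|^s) = \Omega(|P'|^s)$ many $s$-tuples $\{v_1,\ldots,v_s\}\subseteq X$ each forming a non-red $s$-clique in $G_c(A)$. Collect these into the family $\mathcal V$. Since every $v_i \in X \subseteq N_G(b)\setminus\{u_k\}$ and $u_k\in N_G(b)$, each such $s$-clique satisfies $\{v_1,\ldots,v_s,u_k\}\subseteq N_G(b)$ (Case (2)), and additionally $x\in N_G(b)$ when $w=x$, giving Case (1). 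This gives the conclusion. I expect the main obstacle — really the only delicate point — to be bookkeeping: making sure that in the ``stop'' condition the extra vertex $x$ is genuinely forced to behave like the distinguished vertex $r$ of $K_{s+1,t}^+$ (not like an ordinary $t$-part vertex), so that the maximal proper configuration we fail to extend is honestly $(K_{s+1,t}^+)^{\text{sub}}$ and not merely $K_{s,t}^{\text{sub}}$; this is what dictates always keeping $x$ in the growing part $Y\cup\{x\}$, and it is the reason the $x$-condition (ii) in Lemma~\ref{super_proper} was imposed in the first place.
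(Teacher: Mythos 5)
Your overall framework --- grow a maximal proper configuration, use $(K_{s+1,t}^+)^{\text{sub}}$-freeness to cap its size at a constant, pigeonhole on the constant-size set of ``used'' subdivision vertices in $B$ to extract a large $X$ lying in a single $N_G(b)$, then feed $X$ into Proposition~\ref{Ramsey_stuff} --- is the paper's framework. But the intermediate structure you grow is the wrong graph, and as a result your stopping criterion does not hold.

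You take a maximal $Y=\{y_1,\ldots,y_p\}\subseteq P'$ such that the \emph{complete bipartite} graph $K_{s,p+1}$ on parts $\{u_1,\ldots,u_s\}$ and $\{x,y_1,\ldots,y_p\}$ is proper, and you assert that $(K_{s+1,t}^+)^{\text{sub}}$-freeness forces $p+1<t$. It does not. The graph $K_{s,p+1}$ is bipartite, whereas $K_{s+1,t}^+$ contains a triangle (the extra vertex $r$, the one vertex of the $s$-part it meets, and any $t$-part vertex). So $K_{s+1,t}^+$ is never a subgraph of $K_{s,p+1}$, and a proper colored $K_{s,p+1}$ only certifies a copy of $K_{s,p+1}^{\text{sub}}$ in $G$, which is not forbidden for any $p$. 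Without $p=O(1)$ the set $B_{used}$ is not of constant size, the pigeonhole only yields $|X|=\Omega(|P'|/p)$, and the bound $|\mathcal V|=\Omega(|P'|^s)$ is lost. (Your guiding intuition ``$\{u_1,\ldots,u_s\}$ will be the $t$-part'' is also backwards: that set has size $s$, so it must be the $s$-part; the $t$-part is the set of $y_j$'s, which is precisely what you enlarge toward size $t$.)

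The fix, which is what the paper does, is to grow a maximal \emph{proper copy of $K_{s+1,p}^+$}: keep $\{u_1,\ldots,u_s\}$ as the $s$-part and $\{y_1,\ldots,y_p\}$ as the growing $t$-part, and let $x$ be the distinguished vertex joined to $u_1$ and to every $y_j$. The edge set is then $\{(x,u_1)\}\cup\{(x,y_j)\}_j\cup\{(u_i,y_j)\}_{i,j}$. With this structure, if $p\geq t$ one can restrict to $y_1,\ldots,y_t$ and read off a proper copy of $K_{s+1,t}^+$ directly, which is a genuine contradiction; hence $p<t$, $|B_{used}|=1+(s+1)p=O(1)$, and the pigeonhole produces $X$ with $|X|=\Omega(|P'|)$. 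The remaining steps of your sketch --- observing that the special vertex $b^\ast\in B_{used}$ is either a common neighbor of an edge touching $x$ (giving Case (1)) or of an edge touching some $u_k$ (giving Case (2)), and then applying Proposition~\ref{Ramsey_stuff} with $n_0=s+t$ to $A'=X$ --- are correct and match the paper.
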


\begin{proof} Let's define a maximal subset
$\{y_1,\cdots,y_r\} \subset P'_{u_1,\cdots,u_s}(X_1)$ such that,
together with $\{x\}$ and  $\{u_1,\cdots,u_s\}$, they form a helpful copy of $K_{s+1,r}^+$, denoted by $H$.
More precisely, the edge set $E(H)$
consists of $\{x,u_1\}$, $\{ \{x,y_i\} \}_{1\leq i\leq r}$, and
$\{ \{u_i,y_j\}\}_{1\leq i\leq s, 1\leq j\leq r}$. Because $H$ is helpful, for each $e\in E(H)$, we can find one distinct vertex $b\in B$ such that   it is a common neighbor of both vertices of the edge $e$. Let $B_{used}$ be the set of these vertices. Note that  $|B_{used}|=1+(s+1)r$.
Since $G$ is $(K_{s+1,t}^+)^{\text{sub}}$-free, we must have $0\leq r<t$, where $r=0$ means that
the maximal subset could be an empty set.

For any $y\in P'_{u_1,\cdots,u_s}(X_1)\backslash \{y_1,\cdots,y_r\}$,  by maximality of the set $\{y_1,\cdots,y_r\}$,
there exists some $b\in B_{used}$ such that $y\in N_G(b)$.
By pigeonhole principle, there exists a certain vertex
$b^\ast\in B_{used}$ which is connected by at least $\Omega(|P'_{u_1,\cdots,u_s}(X_1)|)$ vertices in $P'_{u_1,\cdots,u_s}(X_1)$.
Note that either $b^\ast \in N_G(x)$,
or there is some $1\leq k\leq s$
such that $b^\ast\in N_G(u_k)$, which will lead to the dichotomy in the statement of this lemma.
Applying Proposition~\ref{Ramsey_stuff},
there exist at least $\Omega(|P'_{u_1,\cdots,u_s}(X_1)|^s)$ many non-red $s$-tuples $\{v_1,\cdots,v_s\}$ satisfying both (1) and  (2). Let $\mathcal{V}$ denote the family of $s$-tuples.
So the proof is complete.
\end{proof}

The proof of Theorem~\ref{thm_2} reduces to the following proposition.
\begin{prop}\label{s_tuple_again}
Let $X_1$ be the blue neighborhood of $x\in X$ such that $|X_1|=\Theta(d^2)$.
Then there exists a set $\{y_1,\cdots,y_s\} \subset X_1$ such that $J[y_1,\cdots,y_s]$ is blue, and
the common blue neighborhood of $\{y_1,\cdots,y_s\}$ in $J[X_1]$ has size of $\omega(d)$.
\end{prop}

\begin{proof}
We start from Lemma~\ref{super_helpful} and obtain that
\begin{equation}
|\mathcal P|=\sum_{(u_1,\cdots,u_s)\in X_1^s}|P'_{u_1,\cdots,u_s}(X_1)|=\Omega(d^{4s+2}/n^s)
\end{equation}
We also notice that
the total number does not drop too much if we only sum over all the $s$-tuples $u_1,\cdots,u_s$
with $|P'_{u_1,\cdots,u_s}(X_1)|=\omega(1)$.
More precisely,  let
\begin{equation}I=\{(u_1,\cdots,u_s)\in X_1^s | (u_1,\cdots,u_s) \text{ satisfies }|P'_{u_1,\cdots,u_s}(X_1)|=\omega(1)\}.
\end{equation}
Recall that $|X_1|=d^2$, where $d$ satisfying~(\ref{degree_condition_again}), we know that
$$\sum_{(u_1,\cdots,u_s)\in X_1^s \setminus I}|P'_{u_1,\cdots,u_s}(X_1)|=O(|X_1|^{s}),$$ which is $o(d^{4s+2}/n^s).$ Consequently, we have
\begin{equation}\label{P'}
\sum_{(u_1,\cdots,u_s)\in I}|P'_{u_1,\cdots,u_s}(X_1)|=\Omega( d^{4s+2}/n^s).
\end{equation}

Then by Lemma~\ref{many_s_cliques_2}, for each such $s$-tuple, we can obtain $b\in Y$, and a family $\mathcal V$ of $s$-tuples forming
non-red $s$-cliques, with $|\mathcal V|=\Omega(|P'_{u_1,\cdots,u_s}(X_1)|^s)$, such that
\begin{enumerate}
\item either
$\{v_1,\cdots,v_s\} \in \mathcal V$ implies $\{v_1,\cdots,v_s,x\} \subset N_G(b)$,
\item or there exists some $u_k$ such that
$\{v_1,\cdots,v_s\} \in \mathcal V$ implies $\{v_1,\cdots, v_s, u_k\}\subset N_G(b)$.
\end{enumerate}
By combining the estimation in~\ref{P'} and Jensen's inequality, we can determine the total number  of $2s$-tuples $(u_1,\cdots,u_s,v_1,\cdots,v_s)$
with the above properties

\begin{align}
\Omega\bigg(\sum_{(u_1,\cdots,u_s)\in I}|P'_{u_1,\cdots,u_s}(X_1)|^s \bigg)
&= \Omega\bigg( |I| \bigg(\sum_{(u_1,\cdots,u_s)\in I}\frac{|P'_{u_1,\cdots,u_s}(X_1)|}{|I|}\bigg)^s \bigg) \\
&=\Omega\bigg( |I|^{1-s} \bigg(\sum_{(u_1,\cdots,u_s)\in I}|P'_{u_1,\cdots,u_s}(X_1)|\bigg)^s\bigg)
\nonumber \\
&=\Omega\bigg( d^{2s(-s+1)}\times (d^{4s+2}/n^s)^s\bigg)
\nonumber \\
&=\Omega\bigg(d^{2s^2+4s}/n^{s^2}\bigg). \nonumber
\end{align}
 Then, at least one of the above dichotomies will be satisfied by at least $\Omega(d^{2s^2+4s}/n^{s^2})$ many $2s$-tuples.
In both cases, we will do double counting, with the degree condition
(\ref{degree_condition_again}) at hand.

If assertion (1) is satisfied by $\Omega(d^{2s^2+4s}/n^{s^2})$ many $2s$-tuples, we can choose
$b\in B$ in $n$ ways, then we choose $v_1,\cdots,v_s \in N_G(b)$ in $d^s$ ways.
We note the degree condition implies that
$d^{2s^2+4s}/n^{s^2} = \omega(d^{s} nd^{s}).$
 Therefore, by pigeonhole principle, given by a certain
one particular such choice of $b, v_1,\cdots, v_s$, there will be at least $\omega(d^s)$ ways to extend this choice to find
$2s$-tuple to satisfy assertion (i). Since there are $s$ coordinates to consider, we conclude that
the total number of vertices showing up in these $s$-coordinates is at least $\omega(d)$.

If assertion (2) is satisfied by $\Omega(d^{2s^2+4s}/n^{s^2})$ many $2s$-tuples, we can
choose $b \in B$ in $n$ ways,  and then choose $1\leq k\leq s$, and then choose $v_1,\cdots, v_s,u_k$ in $d^{s+1}$ ways.
Use the degree condition again, we see $d^{2s^2+4s}/n^{s^2} = \omega(d^{s-1}n d^{s+1})$. So by pigeonhole principle,
there is a certain choice of $b,u_k,v_1,\cdots, v_s$, which extends in at least $\omega(d^{s-1})$ ways to a different $2s$-tuple satisfying assertion (ii). Since there are $s-1$ coordinates to consider,
we conclude that the total number of vertices showing up in these
$(s-1)$-coordinates is at least $\omega(d)$. Now the proof of the Proposition is completed.
\end{proof}

\begin{proof}[End of Proof of Theorem~\ref{thm_2}]
From the initial part of this section, we obtain  a vertex $x$ with the blue neighborhood $X_1=\Theta(d^2)$ in the required host graph $G$.
Proposition~\ref{s_tuple_again} provides with an $s$-subset of vertices
$\{v_1,\cdots,v_s\}\subset X_1$, which, together with the vertex $x$,
forms a non-red $(s+1)$-clique. Let $H$ be the graph that consists of vertex set $\{x,v_1,\cdots,v_s\}$
and only one edge $\{x,v_1\}$. So in $J=J(G,e((K_{s+1,t})^+))$, there is a helpful and blue copy of $H$. Then we apply Proposition~\ref{End_of_Janzer} to find a helpful copy of
$K_{s+1,t}^+$ in $J$, and the proof is completed.
\end{proof}

\section{proof of theorem~\ref{family_version}}

Fix an integer $s\geq 3$. By applying Lemma~\ref{make_assumption},
we may assume that the host graph  $G$ has $n$ vertices and is bipartite,  with a balanced bipartition $X\cup Y$.
Moreover, $G$ is $\mathcal F_{1,s,t}$-free and $K$-almost regular, with  minimum degree
\begin{equation}\label{assumption_on_delta_family}
d=\omega( n^{(s^2+1)/(2s+2s^2)}).
\end{equation}

In this context, we will frequently need to
count the total number of copies of helpful blue $K_{s,1}$ subgraphs
in certain colored subgraph.
This is addressed in the following lemma. By Definition~\ref{red_Blue}, let $J=J(G, e(\mathcal F_{1,s,t}))$, where $e(\mathcal F_{1,s,t})=\max\{e(H)|H\in \mathcal F_{1,s,t}\}$.
\begin{lemma}\label{helpful_imhelpful}
Suppose that in $J$,
there exists a vertex $x\in X$ and a subset $S\subset N_B(x)$ of its blue neighborhood such that 
$|S| = \omega(d)$.
Then the number of helpful blue $s$-stars centered at
$x$, with all leaves contained in $S$, is $\Omega( |S|^s)$.
\end{lemma}

\begin{proof}
Let us list the neighbors of $x$ in $G$ as 
$N_G(x)=\{b_1,\cdots,b_k\}$. For each $1\leq i\leq k$, define 
$$W_i:= N_G(b_i) \cap Y.$$ 
For simplicity, we discard any empty sets from this collection, rename the remaining non-empty sets, and continue to denote them $W_i$.
Since every blue neighbor of $x$ shares at least one common neighbor with $y$ in $Y$,
it must belong to some $N_G(b_i)$. Therefore, we have $S=\bigcup_{i=1}^k W_i$.

We claim that
 there exists a disjoint union $S= \bigcup_{i=1}^\ell P_i$,
 where each set satisfies $|P_i|=\Theta(d)$, and
$\ell=\Theta(|S|/d)$, such that the following holds: if the leaves of an  $s$-star are chosen from $s$ distinct sets $P_{i_1}, P_{i_2}, \cdots, P_{i_s}$, respectively,
then the corresponding copy of the $s$-star is helpful.

To prove the claim, we first redefine  the sets $W_i$ to make them disjoint. Set
$W_1=W_1$, and for each $i=2,\cdots,k$, define 
\begin{equation}
W_i= W_i\setminus \bigg(\bigcup_{j=1}^{i-1} W_j\bigg).
\end{equation}
Then
the disjoint union 
$S= \bigcup_{i=1}^k W_i$
forms a partition of $S$, where each part has size at most $Kd$, since $G$ is $K$-almost regular.
Note that every vertex $w_i \in W_i$ has the property that it shares a common neighbor with $y$ that is not a common neighbor of $y$ and any vertex in $W_{i'}$ for $i'<i$.	
Now, we define sets $P_i$ by grouping the $W_i$'s into blocks of approximately $Kd$ elements. First,  find an index $i_1$,
such that 
$P_1= \bigcup_{j=1}^{i_1}W_j$
satisfies 
\begin{equation}
|P_1|\in [\frac 12 K d, \frac 32 K d].
\end{equation}
Suppose inductively that we have defined disjoint unions $P_1,\ldots,P_t$, where
$P_j=\bigcup_{i=i_{j-1}+1}^{i_j}W_i$
for some sequence $1 \leq i_1<\cdots <i_t<k$, and each $P_j$
satisfies 
\begin{equation}
|P_j|\in [\frac 12 K d,\frac 32 Kd].
\end{equation}
If $
S\backslash \bigg( \bigcup_{j=1}^t P_j \bigg)|\leq \frac 12 K d$,
we simply include it into $P_t$ and set $\ell=t$, terminating the process.
Otherwise, we can find $i_{t+1} < k$
such that 
$P_{t+1}=\bigcup_{j=i_t+1}^{i_{t+1}} W_j$
has size in $[\frac 12 K d, \frac 32 Kd]$.
This process must terminate since $S$ is finite. At the end, each $P_i$ satisfies
\begin{equation}
|P_i|\in [\frac 12 K d, 2Kd]
\end{equation} 
and the claim follows.
Clearly, $\ell=\Theta(|S|/d)$.

We now observe that there are $ {\ell \choose s} $ ways to choose $s$ distinct sets $P_i$ from the partition constructed above. In each selected $P_i$,
there are $\Theta(d)$ choices for selecting one vertex. Choosing one vertex from each of the $s$ distinct sets yields a helpful $s$-star.
It follows that the number of such helpful blue $s$-stars is at least
\begin{equation}
\Omega\big( { |S| /d \choose s} \times d^s \big),
\end{equation} 
which simplifies to $\Omega( |S|^s)$. This conclude the proof.
\end{proof}

For any subset \( X_1 \subseteq A \), let \( J[X_1] \) denote the subgraph of \( J \) induced by the vertex set \( X_1 \).
As in the proof of Theorem~\ref{thm_2}, we apply Lemma~\ref{Janzer_blue} twice.
There exists a vertex $x$ whose blue neighborhood $X_1$ has size $|X_1| =\Theta(d^2)$. Since $d^3/n=\omega(1)$, the number of blue edges in $J[X_1]$ is $\Omega(d^6/n)$.
For any $u_1,\ldots, u_s \in X_1$, let $K_{u_1,\ldots,u_s}^y$ denote an $s$-star with center $y$ and leaves $u_1,\cdots, u_s$.
Define the set
\begin{equation}\label{degree_of_stuple}
P_{u_1,\cdots,u_s}(X_1): = \{ y\in X_1\big| K_{u_1,\cdots,u_s}^y \text{ is a helpful blue star}\}.
\end{equation}
In other words, $|P_{u_1,\cdots,u_s}(X_1)|$ counts the number of helpful blue $s$-stars associated with a fixed $s$-tuple in $X_1$. 
In what follows, we will need to recall Definition~\ref{sbsgr}, which defines when a colored copy of a graph is considered helpful.

\begin{lemma}\label{helpful_star_count}
\begin{equation}
\sum_{u_1,\cdots,u_s \in X_1} |P_{u_1,\cdots,u_s}(X_1)| =\Omega(d^{4s+2}/n^s).
\end{equation}
\end{lemma}

\begin{proof}
We use $d_B^{X_1}(y)=|N_B(y)\cap X_1|$
to denote the number of blue neighbours of $y$ in $X_1$. Then, \begin{equation}
\sum_{y\in X_1}d_B^{X_1}(y)= \Omega(d^6/n).
\end{equation}
Fix a small constant $\varepsilon$, and consider only those vertices $y_1,\cdots,y_\ell \in X_1$
that have at least $\varepsilon d^4/n$ blue neighbors in $X_1$.
Since $|X_1|=\Theta(d^2)$, the remaining vertices contribute at most
$\varepsilon' d^6/n$ blue edges in $J[X_1]$, where $\varepsilon'$ depends on $\varepsilon$.
Therefore, if $\varepsilon$ is sufficiently small, the total number of blue edges in $J[X_1]$ remains asymptotically unchanged:
\begin{equation}
\sum_{i=1}^\ell d_B^{X_1}(y_i) =\Omega(d^6/n).
\end{equation}
Since each $y_i$ satisfies $d_B^{X_1}(y_i) \geq \varepsilon_0 d^4/n=\omega(d)$, Lemma~\ref{helpful_imhelpful} implies that
 each $y_i$ is the center of at least
 $\Omega\big((d_B^{X_1}(y_i))^s\big)$  helpful blue $s$-stars in $X_1$.
  By Jensen's inequality, the total number of helpful blue $s$-stars is at least
 \begin{equation}
 \Omega \Big( \sum_{i=1}^\ell d_B^{X_1}(y_i)^s \Big ) =
 \Omega\Big( \ell  \times (\frac 1\ell \sum_{i=1}^\ell d_B^{X_1}(y_i))^s \Big)=
 \Omega (d^{4s+2}/n^s),
 \end{equation}
where we have also used the fact  that $\ell\leq |X_1|=O(d^2)$. This completes the proof.
 \end{proof}

Observing in the context of
Proposition~\ref{End_of_Janzer}, if we find a non-red $s$-clique in $X_1$ whose common blue neighborhood
intersects $X_1$ in a set of size $\omega(d)$, then we can identify a helpful copy of $K_{s,t}$ in $J[X_1]$ for any sufficiently large constant $t$.
Specifically, by selecting common neighbors of the vertex $x$ and each vertex in $V(K_{s,t})$, this would imply that $G$ contains a graph from $\mathcal F_{1,s,t}$, contradicting our initial assumption about $G$.
Indeed, the proof of Theorem~\ref{family_version} reduces to the following proposition.

\begin{prop}\label{good_pair_in_Nbx}
There exist distinct vertices $v_1,\cdots,v_s \in X_1$ satisfying the following properties.
\begin{enumerate}
\item  The subgraph $J[v_1,\cdots,v_s]$ is either blue or empty.
\item The number of vertices
$z \in X_1$ for which $K_{v_1,\cdots,v_s}^z$ forms a blue star is of order $\omega(d)$.
\end{enumerate}
\end{prop}

\begin{proof}
Recall that $|X_1|=\Theta(d^2)$, and
by Lemma~\ref{helpful_star_count}, \begin{equation}
	\sum_{u_1,\cdots,u_s \in X_1} |P_{u_1,\cdots,u_s}(X_1)|=\Omega(d^{4s+2}/n^s).
\end{equation}
In this sum, we may ignore terms that are too small to contribute meaningfully.
More precisely, let us fix a sufficiently large constant $K_0$ (to be determined later), and remove  from the
above sum all $s$-tuples $(u_1,\cdots,u_s)$
for which $|P_{u_1,\cdots,u_s}(X_1)| < K_0$. Note that the total number of helpful blue $s$-stars excluded by this pruning is at most
	$O(d^{2s})=o(d^{4s+2}/n^s)$. Let  $U$ denote the set of $s$-tuples in $X_1$ such that $|P_{u_1,\cdots,u_s}(X_1)|\geq K_0$. Then, summing over all
	$(u_1, \cdots, u_s)$ in $U$, we conclude that the total remains of order $\Omega(d^{4s+2}/n^s)$.

Now consider a fixed $s$-tuple $(u_1,\cdots,u_s)\in U$. Since $G$ is $\mathcal F_{1,s,t}$-free, there exists no helpful copy of $K_{s,t}$ in $J[X_1]$.
Thus, we can find a maximal subset $Y=\{y_1,\cdots,y_p\}\subseteq P_{u_1,\cdots,u_s}(X_1)$ with $p < t$, such that the colored complete bipartite graph
$K_{s,p}$ formed by the vertex sets $\{u_1,\cdots,u_s\}$ and $\{y_1,\cdots,y_p\}$, together with all blue edges between them,
 is helpful.
Consequently, for any vertex $y\in P_{u_1,\cdots,u_s}(X_1) \backslash Y$, there
must exist a common neighbor shared  with some pair $(u_i, y_j)$ from the sets above. By pigeonhole principle, there exists: 
\begin{enumerate}
\item a set $X\subset P_{u_1,\cdots,u_s}(X_1)$, of size $(|P_{u_1,\cdots,u_s}(X_1)|-p)/sp =\Omega(|P_{u_1,\cdots,u_s}(X_1)|)$
\item an index $k\in \{1,\ldots,s\}$, and
\item a vertex $b\in B$,  
\end{enumerate}
 such that
$X\cup \{u_k\} \subset N_G(b)$.

Note once again that the existence of a red $(s+t)$-clique in $X_1$ would contradict the assumption
 that $G$ is $\mathcal F_{1,s,t}$-free.
Therefore, we apply Proposition~\ref{Ramsey_stuff}
with $n_0=s+t$ and $A'=X$. It follows that the total number of non-red $s$-cliques in $X$ is at least $\Omega(|X|^s)$,
which is, in turn, at least
\begin{equation}
	\frac{1}{C_0} |P_{u_1,\cdots,u_s}(X_1)|^s,
\end{equation} for some constant $C_0>0$.

Now we choose the constant $K_0$ to be much larger than $C_0$.
Then, for each $s$-tuple $(u_1,\cdots,u_s) \in U$,
we can obtain at least $\frac{1}{C_0}|P_{u_1,\cdots,u_s}(X_1)|^s$ many $s$-tuples, each of which,
denoted by $(v_1,\cdots,v_s)$, forms a non-red clique in $J[X_1]$, which corresponds to Condition (1) in this proposition.
Note that each edge in this clique is either blue or uncolored; it is possible that the clique contains no blue edges at all.
Additionally, each $v_i$ is a blue neighbor of every $u_j$ for $j=1,\cdots, s$.
Furthermore, for the vertex $b\in B$ and the fixed $u_k$ with $1\leq k\leq s$, we have 
$$\{v_1,\cdots,v_s\} \cup \{u_k\} \subset N_G(b).$$
We now make the following estimate.
The total number of such $2s$-tuples \begin{equation}
\{u_1, \cdots, u_s, v_1,\cdots,v_s\}
\end{equation} is at least
\begin{align}
 \Omega \big( \sum_{U}|P_{u_1,\cdots,u_s}(X_1)|^s \big)
&=   \Omega \big( |U| \times \big(\frac 1{|U|}\sum_{U}|P_{u_1,\cdots,u_s}(X_1)| \big)^s \big) \\
& = \Omega \big( (d^{4s+2}/n^s)^s/(d^{2s})^{s-1})=
\Omega(d^{2s^2+4s}/n^{s^2} \big).\nonumber
\end{align}
Here we have used Jensen's inequality.

Next we perform a double counting.
First, we can choose a position $k$ in $s$ ways. Then, we can choose $b\in B$ in at most $n$ ways.
Next, we choose the values of the coordinates
$v_1,\cdots,v_s, u_k$ in $O(d^{s+1})$ ways,
since they are all neighbors of the chosen vertex $Y$. Therefore, the total number of such choices is $O(nd^{s+1})$ so far.

The degree condition  $d=\omega(n^{(s^2+1)/(2s+2s^2)})$
implies that
\begin{equation}\label{degree_double_counting}
d^{2s^2+4s}/n^{s^2} = \omega(d^{s-1} n d^{s+1}).
\end{equation}
Therefore, for some choice of $k, v_1,\cdots,v_s,u_k$ as described above,
there are at least  $\omega(d^{s-1})$ ways to choose the remaining  coordinates, resulting in
a $2s$-tuple satisfying all the conditions described as above. Since there are exactly $(s-1)$-coordinates left to be determined, we conclude that
there are at least $\omega(d)$ distinct vertices appearing in these coordinates.
Therefore, the conclusion of the proposition follows.
As a result, the proof of Theorem~\ref{family_version} is also completed,
by applying Proposition~\ref{End_of_Janzer}.
\end{proof}

\section*{Acknowledgment}
The authors would like to thank Robert Morris for carefully reviewing the manuscript, improving its readability, and providing valuable suggestions.

\bibliographystyle{abbrv}
\addcontentsline{toc}{chapter}{Bibliography}
\bibliography{sub_turan_r3}

\end{document}